\newtheorem{lemma}{Lemma}[section]
\newtheorem{theorem}{Theorem}[section]
\newtheorem{corollary}{Corollary}[section]
\newtheorem{remark}{Remark}[section]
\numberwithin{equation}{section}
\DeclareMathOperator*{\osc}{osc}
\newcommand{\norm}[1]{\lVert#1\rVert}
\newcommand{\sref}[1]{Section~\ref{#1}}
\newcommand{\tref}[1]{\textsl{Theorem~\ref{#1}}}
\newcommand{\lref}[1]{\textsl{Lemma~\ref{#1}}}
\newcommand{\eref}[1]{\textsc{(\ref{#1})}}
\begin{document}

\title{Global $W^{2,\delta}$ estimates for a type of singular
fully nonlinear elliptic equations
\footnote{This research is supported by NSFC.11171266.}}
\author{Dongsheng Li\\
\href{mailto:lidsh@mail.xjtu.edu.cn}{lidsh@mail.xjtu.edu.cn}\\
{School of Mathematics and Statistics,}\\
{Xi'an Jiaotong University, Xi'an 710049, China}
\and Zhisu Li (\Letter)\footnote{Corresponding author.}\\
\href{mailto:lizhisu@stu.xjtu.edu.cn}{lizhisu@stu.xjtu.edu.cn}\\
{School of Mathematics and Statistics,}\\
{Xi'an Jiaotong University, Xi'an 710049, China}}

\date{}

\maketitle

\begin{abstract}
We obtain global $W^{2,\delta}$ estimates
for a type of singular fully nonlinear elliptic equations
where the right hand side term belongs to $L^\infty$.
The main idea of the proof is
to slide paraboloids from below and above to touch
the solution of the equation,
and then to estimate the low bound of
the measure of the set of contact points
by the measure of the set of vertex points.
\end{abstract}


\section{Introduction}

In this paper,
we obtain global $W^{2,\delta}$ estimates
for viscosity solutions of
the singular fully nonlinear elliptic inequalities
\begin{equation}\label{eq.ineqs}
|Du|^{-\gamma}\mathcal{P}_{\lambda,\Lambda}^-(D^2u)-|Du|^{1-\gamma}\leq f
\leq |Du|^{-\gamma}\mathcal{P}_{\lambda,\Lambda}^+(D^2u)+|Du|^{1-\gamma}
~~\mbox{in}~B_1,
\end{equation}
where $B_1$ is the unit open ball of $\mathds{R}^n$,
$\mathcal{P}_{\lambda,\Lambda}^\pm$ are the Pucci extremal operators,
$0<\lambda\leq\Lambda<\infty$, $0\leq\gamma<1$
and the right hand side term $f\in C^0\cap L^{\infty}(B_1)$.

The class of solutions of inequalities \eref{eq.ineqs}
includes solutions of several kinds of important equations.
The most common equation is
the singular fully nonlinear elliptic equation
in the following type
\[|Du|^{-\gamma}F(D^2u,Du,u,x)=f~~\mbox{in}~B_1,\]
where $0\leq\gamma<1$,
$F(0,0,\cdot,\cdot)\equiv0$
and $F$ is uniformly elliptic
(see \cite{CC} and \cite{Win}), that is,
\[\mathcal{P}_{\lambda,\Lambda}^-(M-N)-b|p-q|
\leq F(M,p,r,x)-F(N,q,s,x)
\leq \mathcal{P}_{\lambda,\Lambda}^+(M-N)+b|p-q|\]
for all $M,N\in S(n)$, $p,q\in\mathds{R}^n$,
$r,s\in\mathds{R}$ and $x\in B_1$,
where $0<\lambda\leq\Lambda<\infty$ and $b\geq0$.
The investigation of equations of this type
has made much progress in recent years.
I. Birindelli and F. Demengel proved comparison principle \cite{BD04}
and $C^{1,\alpha}$ estimate \cite{BD10}.
G. D\'{a}vila, P. Felmer and A. Quaas proved
Alexandroff-Bakelman-Pucci (ABP for short) estimate \cite{DFQ09}
and Harnack inequality \cite{DFQ10}.
To the best of our knowledge,
$W^{2,\delta}$ estimate for this kind of equation
is only known for $\gamma=0$,
that is the uniformly fully nonlinear elliptic equation.
In 1986, F.-H. Lin \cite{Lin} first established the interior
$W^{2,\delta}$ estimates for uniformly elliptic equations of
non-divergent type with measurable coefficients, with the help of
Fabes-Stroock type reverse H{\"o}lder inequality,
estimates of Green's function
and the ABP estimates.
In 1989, L. A. Caffarelli \cite{Caf}
applied ABP estimate, Calder\'{o}n-Zygmund cube decomposition technique,
barrier function method and touching by tangent paraboloid method
to obtain interior $W^{2,\delta}$ estimates
for viscosity solutions of
\[\mathcal{P}_{\lambda,\Lambda}^-(D^2u)\leq f
\leq \mathcal{P}_{\lambda,\Lambda}^+(D^2u),\]
and then he use such $W^{2,\delta}$ estimates
to get interior $W^{2,p}$ estimates for solutions of
\[F(D^2u,x)=f(x),\]
where the oscillation of $F(M,x)$ in $x$ is small
and the homogeneous equations with constants coefficients:
$F(D^2v(x),x_0)=0$ have $C^{1,1}$ interior estimates
(or $F$ is concave) (see also \cite{CC}).
$W^{2,p}$ estimates up to the boundary
were proved by N. Winter \cite{Win} in 2009.
Another famous example of singular equation which satisfies \eref{eq.ineqs}
is the singular $p$-Laplace equation
\begin{equation}\label{eq.pla}
\Delta_p u=f~~\mbox{in}~B_1,
\end{equation}
where $1<p\leq2$ and
\begin{eqnarray*}
\Delta_p u(x)&:=&\mbox{div}\left(|Du(x)|^{p-2}Du(x)\right)\\
&=&|Du(x)|^{-(2-p)}
\left(\delta_{ij}-(2-p)\cdot\frac{D_{i}u(x)D_{j}u(x)}{|Du(x)|^2}\right)D_{ij}u(x).
\end{eqnarray*}
In \cite{Tol}, P. Tolksdorf proved that
each $W^{1,p}\cap C^0(B_1)$-weak solution
of \eref{eq.pla} with $f\in L^{\infty}(B_1)$
is $W^{2,p}_{loc}\cap W^{1,p+2}_{loc}(B_1)$ (see also \cite{Lind}).
\\

We give an elementary proof of $W^{2,\delta}$ estimates
for viscosity solutions of
singular fully nonlinear elliptic
inequalities of the type \eref{eq.ineqs}.
Our estimates are global but the proof
does not need to flatten the boundary as usual, that is,
instead of separating it into interior estimates and boundary estimates,
we do it directly by using a new type of covering lemma.
The basic idea is to slide paraboloids
from below and above to touch
the solution of the equation, and then
to estimate the low bound of
the measure of the set of contact points
by the measure of the set of vertex points.
This idea has originated in the work of X. Cabr{\'e} \cite{Cab}
and continued in the work of O. Savin \cite{Sav} (see also \cite{IS}).
Following the same idea,
J.-P. Daniel \cite{Dan} proved an estimate equivalent to
local $W^{2,\delta}$ estimate for uniformly parabolic equation.
For singular fully nonlinear elliptic equations, intuitively,
once we have a universal control of $\norm{Du}_{L^\infty}$,
for instance $C^{1,\alpha}$ estimate (see \cite{BD10}),
the $W^{2,\delta}$ estimate will then be a natural corollary
of the traditional results of
\cite{Caf}, \cite{CC} and \cite{Win}.
But our method does not depend on
any \emph{a priori} estimate of $Du$
and it does not use maximum principles,
so we can deal with a large class of equations
as illustrated above.\\


The main result of this paper is the following theorem.
\begin{theorem}\label{th.w2d}
Let $0\leq\gamma<1$.
Assume that $u\in C^0(\overline{B_1})$ satisfies (\ref{eq.ineqs})
in the viscosity sense, with $f\in C^0\cap L^{\infty}(B_1)$.
Then $u\in W^{2,\delta}(B_1)$ and
\[\norm{u}_{W^{2,\delta}(B_1)}
\leq C\left(\norm{u}_{L^{\infty}(B_1)}
+\norm{f}_{L^{\infty}(B_1)}^{\frac{1}{1-\gamma}}\right)\]
for any $\delta\in(0,\sigma)$,
where $\sigma=\sigma(n,\lambda,\Lambda)>0$
and $C=C(n,\lambda,\Lambda,\delta)>0$.
\end{theorem}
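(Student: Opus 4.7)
The plan is to adapt the sliding paraboloid technique of Cabr\'e \cite{Cab} and Savin \cite{Sav} to the singular setting, obtaining the measure-theoretic decay globally in a single step. After the scaling $u\leadsto u/K$ with $K=\norm{u}_{L^\infty(B_1)}+\norm{f}_{L^\infty(B_1)}^{1/(1-\gamma)}$, the structure of \eref{eq.ineqs} is preserved with $f$ replaced by $f/K^{1-\gamma}$, so one may henceforth assume $\norm{u}_{L^\infty}\le 1$ and $\norm{f}_{L^\infty}\le 1$. For $M>0$ define
\[\underline G_M=\{x\in B_1:u\text{ is touched from below at }x\text{ by a concave paraboloid of opening }M\}\]
and dually $\overline G_M$; set $G_M=\overline G_M\cap\underline G_M$. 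On $G_M$ the pointwise Hessian exists almost everywhere and satisfies $\abs{D^2u}\le M$, so the theorem will follow once we establish
\[\abs{B_1\setminus G_M}\le CM^{-\sigma}\qquad\text{for some }\sigma=\sigma(n,\lambda,\Lambda)>0,\]
by the layer-cake identity applied to $\{\abs{D^2u}>M\}\subset B_1\setminus G_M$.

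The heart of the matter is a base estimate: a universal fraction of $B_1$ lies in $\underline G_M$ (and symmetrically in $\overline G_M$) as soon as $M$ exceeds a universal threshold. To prove it, fix a vertex set $\Omega$ at bounded distance from $\overline{B_1}$ and, for each $y\in\Omega$, slide the concave paraboloid $x\mapsto -M\abs{x-y}^2/2+t$ upward by decreasing $t$ from $+\infty$ until it first touches $u$ from below at some $x^*=x^*(y)\in\overline{B_1}$. By construction $x^*\in\underline G_M$; moreover, since $\underline G_M$ is a set of semi-convexity for $u$, Aleksandrov's theorem provides a pointwise Hessian almost everywhere on it, and a standard lemma upgrades the viscosity subsolution property on the left of \eref{eq.ineqs} to the pointwise inequality $\mathcal{P}_{\lambda,\Lambda}^-(D^2u(x^*))\le\abs{Du(x^*)}^\gamma f(x^*)+\abs{Du(x^*)}$. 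Since $Du(x^*)=-M(x^*-y)$ with $\abs{x^*-y}$ universally bounded, the right-hand side is at most $C(M^\gamma+M)\le CM$ once $M$ lies above the universal threshold built into the scaling, which is precisely why the exponent $1/(1-\gamma)$ appears in the theorem. Combined with $D^2u(x^*)\ge-MI$, a routine manipulation of $\mathcal{P}_{\lambda,\Lambda}^-$ confines every eigenvalue of $D^2u(x^*)$ to $[-M,CM]$, so the vertex-to-contact inverse map $T(x^*)=x^*+M^{-1}Du(x^*)=y$ has Jacobian $\det(I+M^{-1}D^2u(x^*))$ universally bounded. The area formula then yields $\abs{\Omega}\le C\abs{\underline G_M\cap B_1}$, the desired base estimate.

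To upgrade the base estimate to the power decay $\abs{B_1\setminus G_M}\le CM^{-\sigma}$ I would iterate through dyadic scales via a Calder\'on--Zygmund style covering lemma applied directly to $B_1$, with no boundary flattening; this is exactly where the ``new type of covering lemma'' announced in the introduction enters, providing the combinatorial mechanism near $\partial B_1$, where the two-sided symmetry that allows sliding paraboloids to come in from every direction breaks down. Once the decay is in hand the $W^{2,\delta}$ bound is immediate from the layer-cake identity
\[\int_{B_1}\abs{D^2u}^\delta\,dx=\delta\int_0^\infty M^{\delta-1}\abs{\{\abs{D^2u}>M\}}\,dM,\]
which is finite for every $\delta<\sigma$, together with an elementary interpolation controlling $\norm{u}_{W^{1,\delta}}$ by $\norm{u}_{L^\infty}$ and $\norm{D^2u}_{L^\delta}$. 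The principal obstacle I anticipate is the global covering step up to $\partial B_1$; by contrast the singular term $\abs{Du}^{-\gamma}$ causes no trouble, because at every sliding-paraboloid contact point $\abs{Du}$ is explicitly comparable to the opening $M$, so the method never requires any \emph{a priori} bound on $Du$.
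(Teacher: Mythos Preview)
Your overall strategy---scaling, sliding paraboloids, the area-formula Jacobian bound, a Vitali-type covering lemma, and layer-cake---is exactly the paper's. But your sketch has a genuine gap at the iteration step, and it is not merely a boundary issue.

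What you call the ``base estimate'' shows only that $\abs{\underline G_{M_0}\cap B_1}\ge c\abs{B_1}$ for one universal opening $M_0$. The covering lemma you intend to invoke (the paper's \lref{le.cl}) needs something much stronger: for \emph{every} sub-ball $B_r(x_0)\subset B_1$ that meets $T_K^-$, one must have $\abs{B_r(x_0)\cap T_{KM}^-}\ge\mu\abs{B_r(x_0)}$. This localized density statement is \lref{le.dl} in the paper, and it does not follow by rescaling your base estimate: a paraboloid of opening $KM$ with vertex in $B_r(x_0)$ can perfectly well touch $u$ far outside $B_r(x_0)$, so the contact set need not stay in the small ball. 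The mechanism that forces the new contact points to remain in $B_r(x_0)$ is a barrier argument you have not mentioned: from the single touching paraboloid $P^-_{K,y_1}$ at the given point $x_1\in B_r(x_0)\cap T_K^-$, one constructs a test function $\psi=P^-_{K,y_1}+Kr^2\phi(\abs{x-x_0}/r)$ with a carefully tuned radial profile $\phi$, uses the equation at the minimum of $u-\psi$ to locate a point $x_2\in B_{r/2}(x_0)$ where $u-P^-_{K,y_1}\le C_0Kr^2$, and only then slides paraboloids of opening $KM$ with vertices near $x_2$. The control $u-P^-_{K,y_1}\le C_0Kr^2$ is what pins the new contact set inside $B_r(x_0)$ for $M$ large. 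This is Step~1--Step~2 of the proof of \lref{le.dl}, and without it the covering lemma has no hypothesis to bite on and the power decay does not follow.

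A minor inaccuracy: at a contact point $\abs{Du(x^*)}=M\abs{x^*-y}$ is bounded \emph{above} by $CM$, not comparable to $M$ (the vertex could equal the contact point). This does not affect the Jacobian bound, but your closing remark that ``$\abs{Du}$ is explicitly comparable to the opening $M$'' overstates what the method gives.
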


\begin{remark}
The above global $W^{2,\delta}$ estimates in $B_1$
can be easily extended to those in some general
domain $\Omega\subset\mathds{R}^n$.
For example, $\Omega$ is bounded and can be decomposed
as the union of a collection of balls
with uniformly finite overlapping and
with uniformly lower bound radius.\\
\end{remark}

The paper is organized as follows.
We start in \sref{se.pre} by giving some notations and preliminary tools.
In \sref{se.gwe}, we first reduce \tref{th.w2d} to
\lref{le.w2dc} by rescaling and normalization,
and then the remainder of this section is devoted to
the proof of \lref{le.w2dc}.

\section{Preliminaries}\label{se.pre}

In this paper,
$B_r(x)$ denotes $\{y\in\mathds{R}^n:|y-x|<r\}$ and $B_r$ denotes $B_r(0)$.
$S(n)$ denotes the linear space of symmetric $n\times n$ real matrices.
$I$ denotes the identity matrix.
\\

Given two functions
$u$ and $v:\Omega\subset\mathds{R}^n\rightarrow\mathds{R}$
and a point $x_0\in\Omega$,
we say that
\emph{$u$ touches $v$ by below at $x_0$ in $\Omega$}
and denote it briefly by \emph{$u\overset{x_0}\eqslantless v$ in $\Omega$}, if
$u(x_0)=v(x_0)$ and $u(x)\leq v(x)$, $\forall x\in\Omega$.
\\

For a given continuous function $u:U\subset\mathds{R}^n\rightarrow\mathds{R}$,
we slide the concave paraboloid of opening $\kappa>0$ and vertex $y$
\[-\frac{\kappa}{2}|x-y|^2+C\]
from below in $U$ (by increasing or decreasing $C$)
till it touch the graph of $u$ for the first time.
If the contact point is $x_0$,
we then have
\[C=u(x_0)+\frac{\kappa}{2}|x_0-y|^2
=\underset{x\in U}\inf\left(u(x)+\frac{\kappa}{2}|x-y|^2\right).\]

For a given closed set $V\subset\mathds{R}^n$
and a continuous function $u:B_1\rightarrow\mathds{R}$,
we introduce the definitions of \emph{the contact sets} as follows:
\begin{eqnarray*}
T_\kappa^{-}(V)&:=&T_\kappa^{-}(u,V)
:=\bigg\{x_0\in B_1\mid\exists y\in V~\mbox{such~that}~\\
&~&~~u(x_0)+\frac{\kappa}{2}|x_0-y|^2
=\underset{x\in B_1}\inf\left(u(x)+\frac{\kappa}{2}|x-y|^2\right)\bigg\}\\
&=&~\bigg\{x_0\in B_1\mid\exists y\in V~\mbox{such~that}\\
&~&~~-\frac{\kappa}{2}|x-y|^2+u(x_0)+\frac{\kappa}{2}|x_0-y|^2
~\overset{x_0}\eqslantless u~\mbox{in}~B_1\bigg\},
\end{eqnarray*}
\[T_\kappa^+(V):=T_\kappa^+(u,V):=T_\kappa^-(-u,V)\]
and
\[T_\kappa(V):=T_\kappa(u,V):=T_\kappa^-(u,V)\cap T_\kappa^+(u,V).\]

For simplicity of notation, we will write $T_\kappa^\pm$
instead of $T_\kappa^\pm(u,\overline{B_1})$ when there is no confusion.
Note that $T_\kappa^\pm$ are closed in $B_1$.

We remark that the contact set $T^-_\kappa(u,V)$
has the dual functionality
of $\{u=\Gamma_u\}$ and $\underline{G}_M(u,\Omega)$ in \cite{CC} simultaneously.
The former digs up information on the equation,
while the later measures the second derivatives of the solution.
\\

Given $0<\lambda\leq\Lambda$, we define the
so called \emph{maximal and minimal Pucci extremal operators}
$\mathcal{P}^+_{\lambda,\Lambda}$
and $\mathcal{P}^-_{\lambda,\Lambda}$ (see also \cite{CC}) as follows
\[\mathcal{P}^+_{\lambda,\Lambda}(X)
:=\lambda\sum_{e_i(X)<0}e_i(X)+\Lambda\sum_{e_i(X)>0}e_i(X)\]
and
\[\mathcal{P}^-_{\lambda,\Lambda}(X)
:=\lambda\sum_{e_i(X)>0}e_i(X)+\Lambda\sum_{e_i(X)<0}e_i(X),\]
where $X\in S(n)$ and $e_i(X)$ denote the eigenvalues of $X$.
We will always abbreviate $\mathcal{P}^{\pm}_{\lambda,\Lambda}(X)$
to $\mathcal{P}^{\pm}(X)$.

For convenience,
we state some basic properties of the Pucci extremal operators as below:
\begin{enumerate}
\item
$\mathcal{P}^\pm(kX)=k\mathcal{P}^\pm(X)$,
$\mathcal{P}^\pm(-X)=-\mathcal{P}^\mp(X)$,
$\forall X\in S(n)$, $\forall k\geq0$.
\item
$\mathcal{P}^-(X)+\mathcal{P}^-(Y)\leq \mathcal{P}^-(X+Y)
\leq \mathcal{P}^-(X)+\mathcal{P}^+(Y)\leq \mathcal{P}^+(X+Y)$
$\leq \mathcal{P}^+(X)+\mathcal{P}^+(Y)$, $\forall X,Y\in S(n)$.
\item
$\mathcal{P}^+_{\lambda,\Lambda}(X)=\Lambda\mathrm{tr}X$
and $\mathcal{P}^-_{\lambda,\Lambda}(X)=\lambda\mathrm{tr}X$,
provided $X\geq0$, $X\in S(n)$.\\
\end{enumerate}

We recall the definition of viscosity solutions
(see \cite{CC} for more details).
For example, we say that $u\in C^0(B_1)$ \emph{satisfies}
\[
F(D^2u,Du,u,x)\leq f~~~\mbox{in}~~B_1
\]
\emph{in the viscosity sense},
if $\forall\varphi\in C^2(B_1)$, $\forall x_0\in B_1$,
\[
\varphi\overset{x_0}\eqslantless u~\mbox{in}~U(x_0)
\Rightarrow
F\left(D^2\varphi(x_0),D\varphi(x_0),\varphi(x_0),x_0\right)\leq f(x_0),
\]
where $U(x_0)$ is an open neighborhood of $x_0$ in $B_1$.
\\

We need the following equivalent descriptions
of $L^p$-integrability.
\begin{lemma}\label{le.lp} (see Lemma 7.3. in \cite{CC})
Let g be a nonnegative and measurable function in a
bounded domain $\Omega\subset\mathds{R}^n$.
Suppose that $\eta>0$, $M>1$ and $0<p<\infty$.
Then
\[g\in L^p(\Omega)~\Leftrightarrow~s
:=\sum_{k=1}^{\infty}M^{pk}|\{x\in\Omega\mid g(x)>\eta M^k\}|<\infty\]
and
\[C^{-1}s\leq\|g\|_{L^p(\Omega)}^p\leq C(s+|\Omega|),\]
where $C>0$ is a constant depending only on $\eta$, $M$ and $p$.\\
\end{lemma}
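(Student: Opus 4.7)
The plan is to prove both inequalities via a layer-cake decomposition of $\int_\Omega g^p$ along the geometric sequence $\{\eta M^k\}_{k\geq 0}$. Setting $A_k := \{x\in\Omega : g(x) > \eta M^k\}$ (so the $A_k$ are nested decreasing), the first observation is that $\Omega$ splits, up to the null set $\{g=\infty\}$, as $\{g\leq \eta M\}\cup\bigcup_{k\geq 1}(A_k\setminus A_{k+1})$, which organizes the integral according to the dyadic grid that defines $s$.

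For the upper bound $\|g\|_{L^p(\Omega)}^p\leq C(s+|\Omega|)$, I would pointwise bound $g^p$ by $(\eta M)^p$ on $\{g\leq \eta M\}$ and by $(\eta M^{k+1})^p$ on each slice $A_k\setminus A_{k+1}$. Summing and using $|A_k\setminus A_{k+1}|\leq |A_k|$ gives
\[
\int_\Omega g^p\,dx \;\leq\; (\eta M)^p|\Omega| + \eta^p M^p\sum_{k=1}^{\infty} M^{pk}\,|A_k\setminus A_{k+1}| \;\leq\; (\eta M)^p|\Omega| + \eta^p M^p\, s,
\]
which is the desired estimate with a constant $C$ depending only on $\eta, M, p$.

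For the lower bound $C^{-1}s\leq \|g\|_{L^p(\Omega)}^p$, I would first use $g^p>(\eta M^k)^p$ on $A_k\setminus A_{k+1}$ to obtain $\int_\Omega g^p\,dx\geq \eta^p\sum_{k\geq 1}M^{pk}|A_k\setminus A_{k+1}|$. The one slightly technical step is to convert this bound on a \emph{difference-of-levels} sum into a bound on the \emph{level-set} sum $s$. Interchanging order of summation (Abel/Fubini) and evaluating the geometric series gives
\[
s \;=\; \sum_{k=1}^{\infty}M^{pk}\sum_{j\geq k}|A_j\setminus A_{j+1}| \;=\; \sum_{j=1}^{\infty}|A_j\setminus A_{j+1}|\,\frac{M^p(M^{pj}-1)}{M^p-1} \;\leq\; \frac{M^p}{M^p-1}\sum_{j=1}^{\infty}M^{pj}|A_j\setminus A_{j+1}|,
\]
and combining this with the previous display produces $s\leq \frac{M^p}{\eta^p(M^p-1)}\int_\Omega g^p$.

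Putting the two bounds together yields the quantitative equivalence, and the bi-implication $g\in L^p(\Omega)\Leftrightarrow s<\infty$ follows because $|\Omega|$ is finite by assumption. I do not expect any real obstacle: the geometric-series interchange is the only step that requires care, and everything else is routine dyadic bookkeeping. The constants visibly depend only on $\eta$, $M$, and $p$, as claimed.
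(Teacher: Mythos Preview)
Your argument is correct. Note, however, that the paper does not supply its own proof of this lemma: it is stated as a preliminary tool with a citation to Lemma~7.3 of \cite{CC}, so there is nothing to compare against in the paper itself. Your layer-cake decomposition along the geometric grid $\{\eta M^k\}$ is exactly the standard proof one finds in that reference, and the one delicate point---the Abel/Tonelli interchange converting $\sum_k M^{pk}|A_k|$ into a sum over the annuli $A_j\setminus A_{j+1}$---is handled correctly since all terms are nonnegative. The only cosmetic remark is that the null set $\{g=\infty\}$ is automatically harmless in both directions (if $s<\infty$ then $|A_k|\to 0$; if $g\in L^p$ then $|\{g=\infty\}|=0$), which you already noted.
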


In the last part of this section, we introduce the following consequence
of Vitali's covering lemma, which has similar functionality to
the Calder{\'o}n-Zygmund cube decomposition lemma (see \cite{CC})
but has the advantage of giving global estimates directly.
This result is slightly different from that (growing ink-spots lemma) in \cite{IS},
but the idea of the proof is similar, which according to \cite{IS},
was first introduced by Krylov.
\begin{lemma}\label{le.cl}
Let $0<\mu<1$.
Assume that $E\subset F$ are closed subsets of $B_1$
and $E\neq\emptyset$.
Suppose that for any open ball $B\subset B_1$, if $B\cap E\neq\emptyset$,
then $|B\cap F|\geq\mu|B|$.
Then $|B_1\setminus F|\leq (1-\mu/{5^n})|B_1\setminus E|$.
\end{lemma}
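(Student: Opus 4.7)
The plan is to observe that, by taking complements in $B_1$ and using $E\subset F$, the desired inequality $|B_1\setminus F|\le (1-\mu/5^n)|B_1\setminus E|$ is equivalent to
\[|F\setminus E|\ge \frac{\mu}{5^n}\,|B_1\setminus E|,\]
and to obtain this via a Vitali-type disjoint covering of the open set $B_1\setminus E$ by open balls inside $B_1$ that touch $E$ only on their boundary. Each such ball will carry a $\mu$-density of $F$ by hypothesis, and because its intersection with $E$ lies on a single sphere (Lebesgue measure zero) this $\mu$-density will automatically transfer to a $\mu$-density of $F\setminus E$.

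For each $x\in B_1\setminus E$ I would produce an open ball $B(x)\subset B_1$ containing $x$, with $B(x)\cap E=\emptyset$ and $\overline{B(x)}\cap E\neq\emptyset$. The canonical choice, available whenever $\mathrm{dist}(x,E)\le 1-|x|$, is the centered ball $B(x)=B_{\mathrm{dist}(x,E)}(x)$, whose closure touches $E$ at a nearest point. In the remaining near-boundary regime $\mathrm{dist}(x,E)>1-|x|$ one must instead take an off-center ball through $x$ obtained by a compactness/maximality argument on the family of open balls through $x$ contained in $B_1\setminus E$ and tangent to $E$. Applying the hypothesis to a sequence of open balls in $B_1$ slightly larger than $B(x)$ but now meeting $E$, and passing to the limit using the closedness of $F$, yields $|F\cap\overline{B(x)}|\ge\mu\,|\overline{B(x)}|$; since $\overline{B(x)}\cap E\subset\partial B(x)$ has Lebesgue measure zero, this upgrades to
\[|(F\setminus E)\cap\overline{B(x)}|\ge\mu\,|B(x)|.\]

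Next I apply Vitali's $5r$-covering lemma to the bounded-radius family $\{B(x)\}_{x\in B_1\setminus E}$, extracting a countable disjoint subfamily $\{B_i\}$ with $\bigcup_i 5B_i\supseteq B_1\setminus E$, so that $|B_1\setminus E|\le 5^n\sum_i|B_i|$. Since the closures $\overline{B_i}$ can overlap only on at most tangency points (measure zero), summing the per-ball estimate over the disjoint collection gives
\[|F\setminus E|\ge \sum_i|(F\setminus E)\cap\overline{B_i}|\ge \mu\sum_i|B_i|\ge \frac{\mu}{5^n}\,|B_1\setminus E|,\]
which is what we want. The main technical obstacle I anticipate is the construction of $B(x)$ in the boundary regime $\mathrm{dist}(x,E)>1-|x|$: one needs an off-center open ball inside $B_1$ passing through $x$, disjoint from $E$ and with closure tangent to $E$, and then one must verify that the whole family $\{B(x)\}$ admits a Vitali selection still covering $B_1\setminus E$. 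Handling this in one stroke is precisely what lets the lemma produce global estimates without the usual interior/boundary split emphasized in the paper's introduction.
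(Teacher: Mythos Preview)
Your strategy coincides with the paper's: reduce to $|F\setminus E|\ge\frac{\mu}{5^n}|B_1\setminus E|$, cover $B_1\setminus E$ by balls disjoint from $E$ on which $F$ has density at least $\mu$, and run Vitali. The only difference is in how the balls are chosen, and here the paper's construction is cleaner and dissolves exactly the ``main technical obstacle'' you flag. Rather than insisting that $\overline{B(x)}$ be tangent to $E$ (which forces your interior/boundary case split), the paper simply takes $B^x$ to be a \emph{maximal} open ball contained in $B_1\setminus E$ and containing $x$; no centring at $x$, no tangency requirement. The density bound $|B^x\cap F|\ge\mu|B^x|$ is then obtained by the \emph{contrapositive} of the hypothesis: if it failed, one could enlarge $B^x$ slightly to $\widetilde{B}^x\subset B_1$ (possible since $E\neq\emptyset$ forces $B^x\subsetneq B_1$) still with $|\widetilde{B}^x\cap F|<\mu|\widetilde{B}^x|$, whence $\widetilde{B}^x\cap E=\emptyset$, contradicting maximality. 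This single argument handles uniformly the case where $B^x$ is limited by $\partial B_1$, by $E$, or by both, so there is no boundary regime to treat separately. Finally, because $B^x\cap E=\emptyset$ for the \emph{open} ball, one has $B^x\cap F=B^x\cap(F\setminus E)$ exactly, so your measure-zero boundary discussion and limiting argument through closures are also unnecessary.
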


\begin{proof}
It suffices to prove that $|F\setminus E|\geq\frac{\mu}{5^n}|B_1\setminus E|$.

For any $x\in B_1\setminus E$, by the openness of $B_1\setminus E$,
there exist open balls contained in $B_1\setminus E$ and containing $x$,
we choose one of the largest of them and denote it by $B^x$.

We claim that $|B^x\cap F|\geq\mu|B^x|$. Otherwise, since $E\neq\emptyset$,
and hence $B^x\subset B_1\setminus E\subsetneqq B_1$,
we may enlarge $B^x$ a little bit, denoted by $\widetilde{B}^x$,
such that $B^x\subset\widetilde{B}^x\subset B_1$
and $|\widetilde{B}^x\cap F|<\mu|\widetilde{B}^x|$.
By the hypothesis of the lemma, $\widetilde{B}^x\cap E=\emptyset$,
thus $\widetilde{B}^x\subset B_1\setminus E$,
which contradicts the definition of $B^x$.
Furthermore, since $B^x\cap F\setminus E=B^x\cap F$,
it follows that $|B^x\cap F\setminus E|\geq\mu|B^x|$.

Now consider the covering
$\underset{x\in B_1\setminus E}\cup B^x \supset B_1\setminus E$.
By the Vitali covering lemma,
there exists an at most countable set of points $x_i\in B_1\setminus E$,
such that $\{B^{x_i}\}_{i}$ are disjoint
and $\underset{i}\cup 5B^{x_i}\supset B_1\setminus E$.
Hence we have
\begin{eqnarray*}
\left|F\setminus E\right|
&\geq& \left|\left(\underset{i}\cup B^{x_i}\right)\cap F\setminus E\right|
=\left|\underset{i}\cup (B^{x_i}\cap F\setminus E)\right|
=\underset{i}\sum \left|B^{x_i}\cap F\setminus E\right|\\
&\geq& \mu\underset{i}\sum\left|B^{x_i}\right|
=\frac{\mu}{5^n}\underset{i}\sum\left|5B^{x_i}\right|
\geq\frac{\mu}{5^n}\left|\underset{i}\cup 5B^{x_i}\right|
\geq\frac{\mu}{5^n}\left|B_1\setminus E\right|.
\end{eqnarray*}
\end{proof}

\section{Global $W^{2,\delta}$ estimates}\label{se.gwe}
We first give the following lemma.
\begin{lemma}\label{le.w2dc}
Let $0\leq\gamma<1$.
Assume that $u\in C^0(\overline{B_1})$ satisfies (\ref{eq.ineqs})
in the viscosity sense, where $f\in C^0\cap L^{\infty}(B_1)$.
Then there exists
$\sigma=\sigma(n,\lambda,\Lambda)>0$
such that for any $\delta\in(0,\sigma)$,
if $\norm{u}_{L^{\infty}(B_1)}\leq 1/16$
and $\norm{f}_{L^{\infty}(B_1)}\leq 1$,
then
\[\norm{u}_{W^{2,\delta}(B_1)}\leq C(n,\lambda,\Lambda,\delta).\]
\end{lemma}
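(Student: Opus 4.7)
The plan is to prove geometric decay of the level-set measure $|B_1 \setminus T_\kappa|$ as $\kappa \to \infty$ by iterating \lref{le.cl}, then convert this decay into an $L^\delta$ bound on $|D^2 u|$ via \lref{le.lp}. Two preliminary ingredients control $u$ on $T_\kappa$. First, a pointwise Hessian bound: if $x_0 \in T_\kappa = T_\kappa^- \cap T_\kappa^+$ and the Alexandrov second derivative exists at $x_0$ (which happens a.e.\ on $T_\kappa$), the two touching paraboloids sandwich $u$ near $x_0$ and force $-\kappa I \leq D^2u(x_0) \leq \kappa I$, so $|D^2u(x_0)| \leq \kappa$; in particular, modulo a null set,
\[
\{x \in B_1 : |D^2u(x)| > \kappa\} \subseteq B_1 \setminus T_\kappa.
\]
This is purely geometric and uses nothing from \eref{eq.ineqs}. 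Second, a Jacobian bound for the vertex map: on $T_\kappa^-$, the map $\Psi_\kappa^-(x_0) := x_0 + \kappa^{-1} Du(x_0)$ has derivative $I + \kappa^{-1}D^2u(x_0)$. Applying the left inequality of \eref{eq.ineqs} pointwise at a.e.\ $x_0 \in T_\kappa^-$, together with $D^2u(x_0) \geq -\kappa I$, $|Du(x_0)| = \kappa|x_0 - y| \leq 2\kappa$, $\|f\|_{L^\infty} \leq 1$ and $\gamma < 1$ (to absorb $|Du|^{1-\gamma} \leq 1 + |Du|$), one gets $\mathcal{P}^-(D^2u(x_0)) \leq C(n,\lambda,\Lambda)\kappa$ for $\kappa \geq 1$. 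Combined with $D^2u(x_0) \geq -\kappa I$, every eigenvalue of $D^2u(x_0)$ lies in $[-\kappa, C'\kappa]$, so $\det(D\Psi_\kappa^-(x_0)) \in [0,L]$ for a universal $L$, and by the area formula $|\Psi_\kappa^-(E)| \leq L|E|$ for every measurable $E \subseteq T_\kappa^-$. A symmetric bound holds on $T_\kappa^+$.

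The heart of the argument is a local density estimate: I claim there are universal $\kappa_0, M > 1$ and $\mu \in (0,1)$ such that for every $\kappa \geq \kappa_0$ and every open ball $B \subseteq B_1$ with $B \cap T_\kappa \neq \emptyset$,
\[
|B \cap T_{M\kappa}| \geq \mu |B|.
\]
To prove it I would fix $x^* \in B \cap T_\kappa$, whose two-sided touching paraboloids sandwich $u$ near $x^*$, and vary the vertices $y$ of opening-$M\kappa$ paraboloids over a set $V \subseteq \mathds{R}^n$ of measure $\geq c|B|$ concentrated close to $x^*$. For $M$ universally large, the sandwich at $x^*$ together with $\|u\|_{L^\infty} \leq 1/16$ would force both the infimum of $u(x) + \tfrac{M\kappa}{2}|x-y|^2$ and the supremum of $u(x) - \tfrac{M\kappa}{2}|x-y|^2$ over $\overline{B_1}$ to be attained strictly inside $B$. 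Hence $V \subseteq \Psi_{M\kappa}^-(B \cap T_{M\kappa}^-) \cap \Psi_{M\kappa}^+(B \cap T_{M\kappa}^+)$, and the Jacobian bound gives $|B \cap T_{M\kappa}| \geq L^{-1}|V| \geq \mu|B|$. With this density estimate the nested closed sets $E_k := T_{\kappa_0 M^k}$ satisfy the hypothesis of \lref{le.cl}, so $|B_1 \setminus E_k| \leq (1-\mu/5^n)^k |B_1|$ by iteration. Combined with the pointwise Hessian bound, $|\{|D^2u| > \kappa_0 M^k\}| \leq C\rho^k$ with $\rho := 1 - \mu/5^n$, and \lref{le.lp} applied to $g = |D^2u|$ yields $\|D^2u\|_{L^\delta(B_1)} \leq C(n,\lambda,\Lambda,\delta)$ for every $\delta < \sigma := \log(1/\rho)/\log M$. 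The bound $\|u\|_{L^\infty} \leq 1/16$ together with a standard interpolation then supplies the full $W^{2,\delta}(B_1)$ estimate.

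The main obstacle is the local density step. The delicate points are: (i) guaranteeing that the infimum/supremum of $u \pm \tfrac{M\kappa}{2}|x-y|^2$ over $\overline{B_1}$ is attained strictly inside $B$, not on $\partial B_1$ nor in $B_1 \setminus B$, which rests on the two-sided paraboloid control at $x^*$ and on the normalization $\|u\|_{L^\infty} \leq 1/16$; (ii) enforcing that the new contact points belong to the \emph{two-sided} set $T_{M\kappa}$ rather than only to $T_{M\kappa}^-$ or $T_{M\kappa}^+$ separately, which couples the two inequalities of \eref{eq.ineqs} through the reference point $x^*$; and (iii) handling the singular term $|Du|^{-\gamma}$ and the compensating source $|Du|^{1-\gamma}$ uniformly in the vertex $y$, where $\gamma < 1$ enters essentially to absorb the source term into the estimate for $\mathcal{P}^-$. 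Keeping $\kappa_0, M, \mu$ depending only on $n, \lambda, \Lambda$ is what turns the iteration into a true geometric decay.
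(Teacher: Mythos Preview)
Your overall strategy matches the paper's: establish geometric decay of $|B_1\setminus T_\kappa|$ via a local density estimate plugged into \lref{le.cl}, then convert to an $L^\delta$ bound on $D^2u$ through \lref{le.lp}. The Jacobian bound for the vertex map is also right in spirit (the paper justifies applying the viscosity inequality pointwise via Jensen's inf-convolution, which you gloss over but could supply).

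The genuine gap is exactly the obstacle you flag as (ii), and your proposed resolution does not work. From $V\subseteq\Psi_{M\kappa}^-(B\cap T_{M\kappa}^-)$ and $V\subseteq\Psi_{M\kappa}^+(B\cap T_{M\kappa}^+)$ the Jacobian bound gives lower bounds on $|B\cap T_{M\kappa}^-|$ and $|B\cap T_{M\kappa}^+|$ \emph{separately}. But $T_{M\kappa}=T_{M\kappa}^-\cap T_{M\kappa}^+$, and nothing forces the contact point from below and the contact point from above attached to a given vertex $y$ to coincide. So the step ``$|B\cap T_{M\kappa}|\ge L^{-1}|V|$'' does not follow. ``Coupling through the reference point $x^*$'' only ensures both families of contact points land in $B$; it says nothing about the measure of their intersection. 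Inclusion--exclusion would yield $|B\cap T_{M\kappa}|\ge(2\mu-1)|B|$, which is useless since $\mu$ is tiny. Even the base case $T_{\kappa_0}\neq\emptyset$ of your iteration is not clear for the two-sided set.

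The paper sidesteps this entirely: it proves the density lemma and runs the covering iteration for $T_\kappa^-$ alone, using only the left inequality in \eref{eq.ineqs}, obtaining $|B_1\setminus T^-_{M^k}|\le\theta^k|B_1|$. Applying the identical argument to $-u$ (which satisfies the left inequality because $u$ satisfies the right one) gives $|B_1\setminus T^+_{M^k}|\le\theta^k|B_1|$. Only at the very end are the two combined via
\[
|B_1\setminus T_{M^k}|\le|B_1\setminus T^-_{M^k}|+|B_1\setminus T^+_{M^k}|\le 2\theta^k|B_1|,
\]
after which the rest of your argument goes through unchanged. This decoupling of the one-sided contact sets is the missing idea.
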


To prove \tref{th.w2d}, it suffices to prove \lref{le.w2dc}.
Indeed, suppose $u$ satisfies the hypothesis of \tref{th.w2d}.
Let
\[a:=\left(16\norm{u}_{L^\infty(B_1)}
+\norm{f}_{L^\infty(B_1)}^{\frac{1}{1-\gamma}}+\varepsilon\right)^{-1},\]
where $\varepsilon>0$. Then the scaled function $\widetilde{u}(x):=au(x)$ solves
\[|D\widetilde{u}|^{-\gamma}\mathcal{P}_{\lambda,\Lambda}^-(D^2\widetilde{u})
-|D\widetilde{u}|^{1-\gamma}
\leq a^{1-\gamma}f=:\widetilde{f}
\leq |D\widetilde{u}|^{-\gamma}\mathcal{P}_{\lambda,\Lambda}^+(D^2\widetilde{u})
+|D\widetilde{u}|^{1-\gamma}
~~\mbox{in}~B_1,\]
and satisfies $\norm{\widetilde{u}}_{L^\infty(B_1)}\leq1/16$
and $\norm{\widetilde{f}}_{L^{\infty}(B_1)}\leq1$.
Therefore, if
\[\norm{\widetilde{u}}_{W^{2,\delta}(B_1)}\leq C(n,\lambda,\Lambda,\delta),\]
by scaling back to $u$ and letting $\varepsilon\rightarrow0$, we obtain
\[
\norm{u}_{W^{2,\delta}(B_1)}\leq Ca^{-1}
\leq C\left(\|u\|_{L^{\infty}(B_1)}+\|f\|_{L^{\infty}(B_1)}^{\frac{1}{1-\gamma}}\right),
\]
which is the assertion of \tref{th.w2d}.\\

We establish \lref{le.w2dc} by the following two lemmas.
First we need the density lemma (\lref{le.dl})
which is a key lemma in this paper
and the strategy of its proof is modified from that in \cite{Sav}.

\begin{lemma}\label{le.dl}Let $0\leq\gamma<1$ and $K\geq1$.
Assume that $u\in C^0(\overline{B_1})$ satisfies
\begin{equation}\label{eq.dl}
|Du|^{-\gamma}\mathcal{P}_{\lambda,\Lambda}^-(D^2u)-|Du|^{1-\gamma}
\leq f~~\mbox{in}~B_1
\end{equation}
in the viscosity sense, where $f\in C^0\cap L^{\infty}(B_1)$
and $\norm{f}_{L^{\infty}(B_1)}\leq1$.
Then there exist constants
$M=M(n,\lambda,\Lambda)>1$ and $0<\mu=\mu(n,\lambda,\Lambda)<1$, such that
if $B_r(x_0)\subset B_1$ satisfies
\[B_r(x_0)\cap T^-_K\neq\emptyset,\]
then
\[\left|B_r(x_0)\cap T^-_{KM}\right|\geq \mu|B_r(x_0)|.\]
\end{lemma}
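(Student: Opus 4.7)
The plan is to adapt the paraboloid-sliding strategy of Savin \cite{Sav} (cf.\ Cabr\'e \cite{Cab} and the parabolic version of Daniel \cite{Dan}) to the singular setting. By hypothesis pick $\bar x\in B_r(x_0)\cap T_K^-$ with vertex $\bar y\in\overline{B_1}$, so that
\[
P_0(x):=-\tfrac{K}{2}|x-\bar y|^2+C_0,\qquad C_0:=u(\bar x)+\tfrac K2|\bar x-\bar y|^2,
\]
satisfies $P_0\le u$ in $B_1$ with equality at $\bar x$, and set $v:=u-P_0\ge 0$ in $B_1$, $v(\bar x)=0$. An elementary algebraic identity (completing the square in $x$) shows that a concave paraboloid of opening $KM$ with vertex $y$ touches $u$ from below at $x^*$ if and only if the concave paraboloid of opening $\kappa:=K(M-1)$ with vertex $\tilde y:=(My-\bar y)/(M-1)$ touches $v$ from below at $x^*$. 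The inverse map $\tilde y\mapsto y=((M-1)\tilde y+\bar y)/M$ is a convex combination, hence sends $\overline{B_1}$ into $\overline{B_1}$, and has universal Jacobian $(1-1/M)^n$. So it suffices to produce many $\kappa$-paraboloid contact points for $v$ with vertices in $\overline{B_1}$.

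For the vertex family I would take $V:=B_\rho(\bar x)\cap\overline{B_1}$ with $\rho=\rho(n)\cdot r$ a small universal multiple of $r$. The localization is immediate from $v(\bar x)=0$: testing against $x=\bar x$ in the defining infimum gives, for any $\tilde y\in V$,
\[
\inf_{x\in B_1}\bigl(v(x)+\tfrac\kappa2|x-\tilde y|^2\bigr)\le v(\bar x)+\tfrac\kappa2|\bar x-\tilde y|^2=\tfrac\kappa2|\bar x-\tilde y|^2,
\]
so at the contact point $x^*=x^*(\tilde y)$, using $v(x^*)\ge 0$, we obtain $|x^*-\tilde y|\le|\bar x-\tilde y|\le\rho$ and hence $x^*\in B_{2\rho}(\bar x)$. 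A standard geometric estimate yields $|B_{2\rho}(\bar x)\cap B_r(x_0)|\ge c(n)|B_r(x_0)|$ for any $\bar x\in B_r(x_0)$ (no matter how close $\bar x$ is to $\partial B_r(x_0)$), and likewise $|V|\ge c(n)\rho^n$.

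For the measure estimate I would invoke Alexandrov's theorem on the semi-convex function $v$: at a.e.\ $x^*\in T_{KM}^-$, $u$ is twice pointwise differentiable in the classical sense, with the touching condition giving $D^2u(x^*)\ge -KM\,I$. Applying (\ref{eq.dl}) classically at such $x^*$ yields
\[
\mathcal P^-(D^2u(x^*))\le |Du(x^*)|^\gamma\|f\|_{L^\infty(B_1)}+|Du(x^*)|\le C\,KM,
\]
where I used $|Du(x^*)|=KM|x^*-y|\le 2KM$ and $\|f\|_\infty\le 1$. Combined with the lower eigenvalue bound this forces the positive eigenvalues of $D^2u(x^*)$ to be $\le CKM$, hence $|D^2v(x^*)|=|D^2u(x^*)+KI|\le CKM$. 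Consequently the change of variables $x\mapsto\tilde y=x+Dv(x)/\kappa$ has Jacobian $J=I+D^2v/\kappa$ with eigenvalues in $[0,C']$ (universal, after fixing $M$ universal and large enough relative to the $1/\kappa$ factor), and the area formula gives
\[
c(n)\rho^n\le|V|\le\int_{x^*(V)}|\det J|\,dx^*\le C''\,|B_{2\rho}(\bar x)\cap T_{KM}^-|.
\]
Intersecting with $B_r(x_0)$ and applying the geometric bound on $|B_{2\rho}(\bar x)\cap B_r(x_0)|$ delivers $|B_r(x_0)\cap T_{KM}^-|\ge\mu\,|B_r(x_0)|$.

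The hardest point I expect is extracting a $\mathcal P^-$-bound on $D^2u$ through the singular factor $|Du|^{-\gamma}$: formally one multiplies (\ref{eq.dl}) by $|Du|^\gamma$, which is only meaningful off the critical set $\{Du=0\}$. However, at a $KM$-paraboloid contact point $|Du(x^*)|=KM|x^*-y|$ is automatically bounded above by $2KM$, and in the degenerate limit $|Du|\to 0$ the right-hand side of the rearranged inequality only shrinks, so the estimate survives. Careful bookkeeping of the $K$- and $M$-dependencies then fixes $M=M(n,\lambda,\Lambda)$ and $\mu=\mu(n,\lambda,\Lambda)$ universal.
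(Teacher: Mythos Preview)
Your localization step has a genuine gap. You place the vertex family $V=B_\rho(\bar x)\cap\overline{B_1}$ around the touching point $\bar x\in B_r(x_0)$ and correctly deduce that every $\kappa$-contact point $x^*$ lies in $B_{2\rho}(\bar x)$. But $\bar x$ may sit arbitrarily close to $\partial B_r(x_0)$---and in the covering lemma (\lref{le.cl}) this is exactly the generic situation, since the balls $B^x$ there are maximal in $B_1\setminus E$ and only meet $E=T_K^-$ at their boundary. In that case $B_{2\rho}(\bar x)$ is not contained in $B_r(x_0)$, and the area-formula bound $|V|\le C''|B_{2\rho}(\bar x)\cap T^-_{KM}|$ tells you nothing about $|B_r(x_0)\cap T^-_{KM}|$: the entire contact set could fall on the wrong side. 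Your last sentence (``intersecting with $B_r(x_0)$ and applying the geometric bound'') does not repair this; knowing $|B_{2\rho}(\bar x)\cap B_r(x_0)|\ge c(n)|B_r(x_0)|$ and $|B_{2\rho}(\bar x)\cap T^-_{KM}|\ge c\rho^n$ gives no lower bound on the intersection of the three sets.

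This is precisely why the paper's Step~1 is there. It uses a barrier $\psi=P^-_{K,y_1}+Kr^2\phi(|x-x_0|/r)$ and the equation \eref{eq.dl} at the minimum of $u-\psi$ to produce a point $x_2\in B_{r/2}(x_0)$---well inside the ball---where $v=u-P^-_{K,y_1}$ is not zero but merely small, $v(x_2)\le C_0Kr^2$. One then centers the vertex set near $x_2$ and chooses $M$ large so that the same comparison with $v(x_2)$ forces every contact point into $B_{3r/8}(x_2)\subset B_r(x_0)$. In short, you have correctly identified the paraboloid algebra and the area-formula mechanism (Steps~2--3 of the paper), but the barrier/recentering step that moves from the edge point $\bar x$ to an interior point $x_2$ is not optional---without it the conclusion fails. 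A secondary issue: you invoke Alexandrov's theorem on ``the semi-convex function $v$'', but $v$ is only $C^0$; the paper handles this via Jensen's inf-convolution $u_\epsilon$, which is a.e.\ $C^{1,1}$ and still satisfies \eref{eq.dl} (with $f$ replaced by a shifted $f_\epsilon$), and then passes to the limit.
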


\begin{proof}
By assumption, there exist
$x_1\in B_r(x_0)\cap T^-_K$
and $y_1\in \overline{B_1}$ such that
\[P^-_{K,y_1}(x)
:=-\frac{K}{2}|x-y_1|^2+u(x_1)
+\frac{K}{2}|x_1-y_1|^2\overset{x_1}\eqslantless u~\mbox{in}~B_1.\]
The proof now will be divided into three steps.

\emph{Step 1.}
We prove that there exist $x_2\in B_{r/2}(x_0)$ and $C_0=C_0(n,\lambda,\Lambda)>0$
such that
\begin{equation}\label{eq.c1}
u(x_2)-P^-_{K,y_1}(x_2)\leq C_0Kr^2.
\end{equation}

Set
\[\psi(x):=P^-_{K,y_1}(x)+Kr^2\phi\left(\frac{|x-x_0|}{r}\right),\]
where $\phi(t)=e^Ae^{-At^2}-1$ with $A=A(n,\lambda,\Lambda)>1$ to be determined later.
Let $x_2\in \overline{B_r(x_0)}$ such that
\[(u-\psi)(x_2)=\underset{\overline{B_r(x_0)}}\min (u-\psi).\]
Since
$(u-\psi)|_{\partial B_r(x_0)}\geq0$
and
\[(u-\psi)(x_2)\leq(u-\psi)(x_1)=-Kr^2\phi\left(\frac{|x_1-x_0|}{r}\right)<0,\]
we deduce that
$x_2\in B_r(x_0)$
and \[u(x_2)<\psi(x_2)=P^-_{K,y_1}(x_2)+Kr^2\phi\left(\frac{|x_2-x_0|}{r}\right)
\leq P^-_{K,y_1}(x_2)+e^AKr^2.\]
Hence, by letting $C_0:=e^A$, we now only need to show that
there exists an $A=A(n,\lambda,\Lambda)>1$
such that $x_2\in B_{r/2}(x_0)$.

To obtain a contradiction,
suppose that $x_2$ is outside of $B_{r/2}(x_0)$
for any $A>1$.
Let $t:=\frac{|x_2-x_0|}{r}$. We have $1/2\leq t<1$.
Since
\[\psi+\underset{\overline{B_r(x_0)}}\min(u-\psi)
\overset{x_2}\eqslantless u~~\mbox{in}~B_r(x_0),\]
by applying the definition of the viscosity solution of \eref{eq.dl},
we have
\begin{equation}\label{eq.xA}
\mathcal{P}_{\lambda,\Lambda}^-(D^2\psi(x_2))-|D\psi(x_2)|
\leq |D\psi(x_2)|^{\gamma}f(x_2).
\end{equation}
Since
\[\left|DP^-_{K,y_1}(x_2)\right|\leq CK,\]
\[\left|D^2P^-_{K,y_1}(x_2)\right|\leq CK,\]
\[\left|\frac{\phi_{t}}{t}\right|\leq CAe^Ae^{-At^2}\]
and
\[\left|\phi_{tt}\right|\geq \frac{1}{C}A^2e^Ae^{-At^2},\]
we see that
\[|D\psi(x_2)|^{\gamma}\leq C\left(KAe^A e^{-At^2}\right)^{\gamma}\]
and
\begin{eqnarray*}
&~&\mathcal{P}_{\lambda,\Lambda}^-(D^2\psi)(x_2)-|D\psi(x_2)|\\
&\geq& CK\left(|\phi_{tt}|-1\right)
-CK\left(\left|\frac{\phi_{t}}{t}\right|+1\right)-CKAe^A e^{-At^2}\\
&\geq& CKAe^A e^{-At^2},
\end{eqnarray*}
where $A$ is large enough
and all the constants $C$ depend only on $n,\lambda$ and $\Lambda$.
By \eref{eq.xA}, we conclude that
\[\left(KAe^A e^{-At^2}\right)^{1-\gamma}\leq 1,\]
which is impossible.

\emph{Step 2.}
We prove that there exists
$M=M(n,\lambda,\Lambda)>1$ such that
\begin{equation}\label{eq.TKM}
T^-_{KM}(V)
\subset B_r(x_0)\cap T^-_{KM},
\end{equation}
where
\[V:=\overline{B_{\frac{r(M-1)}{8M}}
\left(\frac{1}{M}y_1+\frac{M-1}{M}x_2\right)}.\]

$\forall\widetilde{x}\in T^-_{KM}(V)$,
$\exists\widetilde{y}\in V$
such that
\begin{equation*}
P^-_{KM,\widetilde{y}}(x)
:=-\frac{KM}{2}|x-\widetilde{y}|^2+u(\widetilde{x})
+\frac{KM}{2}|\widetilde{x}-\widetilde{y}|^2
\overset{\widetilde{x}}\eqslantless u~~\mbox{in}~B_1.
\end{equation*}
Since
\[
P^-_{KM,\widetilde{y}}(x)-P^-_{K,y_1}(x)
=-\frac{K(M-1)}{2}|x-y|^2+R,
\]
where
\begin{equation}\label{eq.ydef}
y:=\frac{M\widetilde{y}-y_1}{M-1}
\end{equation}
and $R=R\left(\widetilde{y},K,M,y_1,\widetilde{x},
u(\widetilde{x}),x_1,u(x_1)\right)$
both do not depend on $x$,
we have
\[P^-_{K,y_1}(x)-\frac{K(M-1)}{2}|x-y|^2+R
\overset{\widetilde{x}}\eqslantless u(x),~
\forall~x\in B_1.\]
Since $x_2\in B_{r/2}(x_0)\subset B_1$
and $y\in \overline{B_{r/8}(x_2)}$,
we see from \eref{eq.c1} that
\[R\leq u(x_2)-P^-_{K,y_1}(x_2)+\frac{K(M-1)}{2}|x_2-y|^2
\leq\left(C_0+\frac{M-1}{128}\right)Kr^2.\]
On the other hand,
\[0\leq u(\widetilde{x})-P^-_{K,y_1}(\widetilde{x})
=P^-_{KM,\widetilde{y}}(\widetilde{x})-P^-_{K,y_1}(\widetilde{x})
=-\frac{K(M-1)}{2}|\widetilde{x}-y|^2+R.
\]
It follows that
\[|\widetilde{x}-y|^2
\leq\frac{2}{M-1}\left(C_0+\frac{M-1}{128}\right)r^2
=\left(\frac{2C_0}{M-1}+\frac{1}{64}\right)r^2.\]
Let $M>\frac{128C_0}{3}+1$,
we obtain $|\widetilde{x}-y|<r/4$.
Thus
\[|\widetilde{x}-x_2|
\leq|\widetilde{x}-y|+|y-x_2|
<r/4+r/8=3r/8,\]
and hence
\begin{equation}\label{eq.tcb}
T^-_{KM}(V)\subset B_{3r/8}(x_2)
\subset\subset B_{r/2}(x_2)\subset B_r(x_0)\subset B_1.
\end{equation}

By \eref{eq.ydef}, we see that
$\forall \widetilde{y}\in V$,
$\exists y\in \overline{B_{r/8}(x_2)}$
such that
\[\widetilde{y}=\frac{1}{M}y_1+\frac{M-1}{M}y.\]
Since $y_1\in \overline{B_1}$
and
$\overline{B_{r/8}(x_2)}\subset
B_{5r/8}(x_0)\subset\subset B_{r}(x_0)\subset B_1$,
we obtain
\begin{equation}\label{eq.vcb}
V\subset\subset B_1,
\end{equation}
by the convexity of $B_1$.
Thus
\[T^-_{KM}(V)\subset T^-_{KM}(\overline{B_1})=T^-_{KM},\]
and hence
\begin{equation*}
T^-_{KM}(V)\subset B_r(x_0)\cap T^-_{KM}.
\end{equation*}

\emph{Step 3.}
We claim that
\begin{equation}\label{eq.VCT}
|V|\leq C\left|T^-_{KM}(V)\right|,
\end{equation}
where $C=C(n,\lambda,\Lambda)>0$.
If we prove this,
then by \eref{eq.TKM},
we will obtain
\begin{eqnarray*}
\left|B_r(x_0)\cap T^-_{KM}\right|
&\geq&\left|T^-_{KM}(V)\right|
\geq\frac{1}{C}|V|\\
&=&\frac{1}{C}\left(\frac{M-1}{8M}\right)^n\left|B_{r}\right|\\
&=:&\mu\left|B_r(x_0)\right|,
\end{eqnarray*}
which proves the lemma.

Hence it remains to prove \eref{eq.VCT}.
To do this, we need to regularize $u$
by the standard $\epsilon$-envelope method of Jensen (see \cite{CC}).
That is, for $\epsilon>0$, let
\[u_{\epsilon}(x):=\underset{z\in B_1}\inf\left(u(z)
+\frac{1}{\epsilon^4}|z-x|^2\right),~~\forall x\in B_1.\]
It is easy to see that
$u_\epsilon$ is $C^{1,1}$ a.e. in $B_1$,
$u_\epsilon\in C^0(B_1)$ and
$u_\epsilon\rightarrow u$ ($\epsilon\rightarrow 0+$)
uniformly on compact subsets of $B_1$.
Furthermore, we show that there exists $\epsilon_0>0$ sufficiently small
such that for any $\epsilon\in(0,\epsilon_0)$,
\begin{equation}\label{eq.te}
T^-_{KM}\left(u_\epsilon,V\right)
\subset B_{1-\epsilon}
\end{equation}
and $u_\epsilon$ satisfies
\begin{equation}\label{eq.ue}
|Du_\epsilon|^{-\gamma}\mathcal{P}_{\lambda,\Lambda}^-(D^2u_\epsilon)
-|Du_\epsilon|^{1-\gamma} \leq f_\epsilon~~\mbox{in}~B_{1-\epsilon}
\end{equation}
in the viscosity sense,
with $f_\epsilon$ to be given later which satisfies
$f_\epsilon\in C^0(B_{1-\epsilon})$,
$\norm{f_\epsilon}_{L^{\infty}(B_{1-\epsilon})}\leq 1$
and $f_\epsilon\rightarrow f$ ($\epsilon\rightarrow 0+$)
uniformly on compact subsets of $B_{1-\epsilon}$.
To see \eref{eq.te}, we only need to note
\eref{eq.tcb} and \eref{eq.vcb}.
We now verify \eref{eq.ue}.
Suppose that $\varphi\in C^2(B_1)$, $x_*\in B_{1-\epsilon}$
and
\[\varphi\overset{x_*}\eqslantless u_\epsilon~\mbox{in}~U(x_*),\]
where $U(x_*)$ is an open neighborhood of $x_*$ in $B_{1-\epsilon}$.
Let $\widetilde{x_*}\in \overline{B_1}$ such that
\[u_{\epsilon}(x_*)=\underset{z\in B_1}\inf\left(u(z)
+\frac{1}{\epsilon^4}|z-x_*|^2\right)
=u(\widetilde{x_*})+\frac{1}{\epsilon^4}|\widetilde{x_*}-x_*|^2.\]
In view of
\[|\widetilde{x_*}-x_*|^2
=\epsilon^4\left(u_\epsilon(x_*)-u(\widetilde{x_*})\right)
\leq\epsilon^4\cdot\underset{\overline{B_1}}\osc u,\]
we have $\widetilde{x_*}\in B_1$ provided $\epsilon$ is small enough.
Since
\begin{eqnarray*}
\varphi(x-\widetilde{x_*}+x_*)
&\leq & u_\epsilon(x-\widetilde{x_*}+x_*)\\
&=& \underset{z\in B_1}\inf\left(u(z)
+\frac{1}{\epsilon^4}|z-x+\widetilde{x_*}-x_*|^2\right)\\
&\leq& u(x)+\frac{1}{\epsilon^4}|\widetilde{x_*}-x_*|^2\\
&=& u(x)+u_\epsilon(x_*)-u(\widetilde{x_*})\\
&=& u(x)+\varphi(x_*)-u(\widetilde{x_*}),
\end{eqnarray*}
provided $x-\widetilde{x_*}+x_*\in U(x_*)$,
we deduce that
\[\varphi(x-\widetilde{x_*}+x_*)-\varphi(x_*)+u(\widetilde{x_*})
\overset{\widetilde{x_*}}\eqslantless u(x)
~~\mbox{in}~U(\widetilde{x_*}),\]
where $U(\widetilde{x_*})$ is
a small open neighborhood of $\widetilde{x_*}$ in $B_1$.
Hence
\[
|D\varphi(x_*)|^{-\gamma}
\mathcal{P}_{\lambda,\Lambda}^-(D^2\varphi(x_*))
-|D\varphi(x_*)|^{1-\gamma}
\leq f(\widetilde{x_*})=:f_\epsilon(x_*),
\]
which gives \eref{eq.ue}.

Now since $u_\epsilon$ is a.e. $C^{1,1}$,
we see that for almost every
$x\in T^-_{KM}\left(u_\epsilon,V\right)$,
there exists a unique
$y\in V$
which satisfies
\[Du_\epsilon(x)=-KM(x-y)\]
and
\[D^2u_\epsilon(x)\geq-KMI.\]
Hence we may define the mapping $y:x\mapsto y$,
$T^-_{KM}\left(u_\epsilon, V\right)
\rightarrow V$,
given by
\[y=x+\frac{1}{KM}Du_\epsilon(x).\]
Since
\[|Du_\epsilon(x)|=KM|x-y|\leq2KM\]
and
\[D_xy=I+\frac{1}{KM}D^2u_\epsilon(x)\geq0,\]
we conclude from \eref{eq.te} and \eref{eq.ue} that
\begin{eqnarray*}
\lambda\textrm{tr}(D_xy)
&=&\mathcal{P}_{\lambda,\Lambda}^-(D_xy)\\
&\leq&\mathcal{P}_{\lambda,\Lambda}^+(I)
+\mathcal{P}_{\lambda,\Lambda}^-\left(\frac{D^2u_\epsilon(x)}{KM}\right)\\
&\leq& n\Lambda+\frac{1}{KM}\left(|Du_\epsilon(x)|^{\gamma}|f_\epsilon(x)|
+|Du_\epsilon(x)|\right)\\
&\leq& n\Lambda+\frac{2}{(KM)^{1-\gamma}}+2\\
&\leq& n\Lambda+4 \leq C
\end{eqnarray*}
and
\[0\leq\det(D_xy)\leq\left(\frac{\textrm{tr}(D_xy)}{n}\right)^n\leq C,\]
where the constants $C$ depend only on $n,\lambda$ and $\Lambda$.
Hence, we have
\[|V|\leq\underset{T^-_{KM}\left(u_\epsilon, V\right)}\int\det(D_xy)dx
\leq C\left|T^-_{KM}\left(u_\epsilon, V\right)\right|,\]
by the area formula.

Finally, it is easy to check that
\[\overline{\underset{\epsilon\rightarrow0+}\lim}
T^-_{KM}\left(u_\epsilon, V\right)
\subset T^-_{KM}\left(u, V\right),\]
where
\[\overline{\underset{\epsilon\rightarrow0+}\lim}
T^-_{KM}\left(u_\epsilon, V\right)
:=\underset{0<\epsilon<\epsilon_0}\cap\underset{0<\delta<\epsilon}\cup
T^-_{KM}\left(u_\delta, V\right).\]
Hence we have
\begin{eqnarray*}
|V|&\leq& C\overline{\underset{\epsilon\rightarrow0+}\lim}
\left|T^-_{KM}\left(u_\epsilon, V\right)\right|\\
&\leq& C\left|\overline{\underset{\epsilon\rightarrow0+}\lim}
T^-_{KM}\left(u_\epsilon, V\right)\right|\\
&\leq& C\left|T^-_{KM}\left(u, V\right)\right|,
\end{eqnarray*}
which gives \eref{eq.VCT} and completes the proof.
\end{proof}
~

We now give a lemma which states
the decay of $\left|B_1\setminus T^-_{t}\right|$ in $t$.
\begin{lemma}\label{le.ed}
Let $0\leq\gamma<1$ and $K\geq1$.
Assume that $u\in C^0(\overline{B_1})$ satisfies
\begin{equation*}
|Du|^{-\gamma}\mathcal{P}_{\lambda,\Lambda}^-(D^2u)-|Du|^{1-\gamma}
\leq f~~\mbox{in}~B_1
\end{equation*}
in the viscosity sense,
where $f\in C^0\cap L^{\infty}(B_1)$,
$\norm{f}_{L^{\infty}(B_1)}\leq 1$
and
$\norm{u}_{L^{\infty}(B_1)}\leq 1/16$.
Then there exist constants
$M=M(n,\lambda,\Lambda)>1$
and $\theta=\theta(n,\lambda,\Lambda)\in(0,1)$
such that
\[\left|B_1\setminus T^-_{KM}\right|
\leq\theta\left|B_1\setminus T^-_{K}\right|.\]
\end{lemma}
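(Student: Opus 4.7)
The plan is to deduce \lref{le.ed} directly from the density estimate \lref{le.dl} by means of the Vitali-type covering \lref{le.cl}. Setting $E := T^-_K$ and $F := T^-_{KM}$, with $M$ the constant produced by \lref{le.dl}, I would apply \lref{le.cl} to obtain
\[|B_1 \setminus T^-_{KM}| \leq \left(1 - \frac{\mu}{5^n}\right)|B_1 \setminus T^-_K|,\]
so one may take $\theta := 1 - \mu/5^n \in (0,1)$; both $\theta$ and $M$ then depend only on $n,\lambda,\Lambda$, as required.

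To apply \lref{le.cl} legitimately, I would verify three preliminary facts. Closedness of $E$ and $F$ in $B_1$ is already noted after the definition of $T_\kappa^\pm$. The density hypothesis --- that for every open ball $B \subset B_1$ meeting $T^-_K$, $|B \cap T^-_{KM}| \geq \mu|B|$ --- is precisely \lref{le.dl}. Non-emptiness of $T^-_K$ follows by a short minimization: with the candidate vertex $y = 0 \in \overline{B_1}$, let $x_0 \in \overline{B_1}$ minimize $h(x) := u(x) + \frac{K}{2}|x|^2$. Since $h(0) = u(0) \leq 1/16$ while $h(x) \geq -1/16 + K/2 \geq 7/16$ for $x \in \partial B_1$ (using $\norm{u}_{L^\infty(B_1)} \leq 1/16$ and $K \geq 1$), the minimum is attained in the interior, so $x_0 \in T^-_K$.

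The only substantive remaining step is the inclusion $T^-_K \subset T^-_{KM}$. Given $x_0 \in T^-_K$ with witness vertex $y \in \overline{B_1}$, I would take the new vertex $y' := x_0 + \frac{1}{M}(y - x_0)$, which is a convex combination of $x_0 \in B_1$ and $y \in \overline{B_1}$ and therefore lies in $\overline{B_1}$. A direct expansion yields
\[|x - y'|^2 - |x_0 - y'|^2 = |x - x_0|^2 - \frac{2}{M}(x - x_0)\cdot(y - x_0),\]
and inserting the hypothesis $u(x) - u(x_0) \geq -\frac{K}{2}|x - x_0|^2 + K(x - x_0)\cdot(y - x_0)$ (which is just the explicit form of $x_0 \in T^-_K$), the inequality one needs, namely
\[u(x) - u(x_0) \geq \frac{KM}{2}\bigl(|x_0 - y'|^2 - |x - y'|^2\bigr) \qquad \forall x \in B_1,\]
collapses to the trivial $\frac{K(M-1)}{2}|x - x_0|^2 \geq 0$. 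Hence $x_0 \in T^-_{KM}$ with witness vertex $y'$.

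I do not anticipate a genuine obstacle in this lemma: all the PDE content is already encapsulated in \lref{le.dl}, and \lref{le.ed} is just its iteration-friendly consequence via the covering lemma. The only technical subtlety is that when passing from opening $K$ to the steeper opening $KM$ the vertex must be shifted toward $x_0$ (otherwise the steeper paraboloid need not stay below $u$ on all of $B_1$), and this shift is precisely what makes the verification $y' \in \overline{B_1}$ rely on both $x_0 \in B_1$ and $M \geq 1$.
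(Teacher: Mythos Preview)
Your proposal is correct and follows exactly the same route as the paper: verify $T^-_K\neq\emptyset$ via minimizing $u(x)+\tfrac{K}{2}|x|^2$, invoke the density lemma (\lref{le.dl}) for the hypothesis of the covering lemma (\lref{le.cl}) with $E=T^-_K$, $F=T^-_{KM}$, and set $\theta=1-\mu/5^n$. In fact your write-up is slightly more complete than the paper's, since you explicitly check the inclusion $T^-_K\subset T^-_{KM}$ required by \lref{le.cl} (with the shifted vertex $y'=x_0+\tfrac{1}{M}(y-x_0)$), a point the paper takes for granted.
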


\begin{proof}
We first show that
\begin{equation}\label{eq.ne}
B_1\cap T^-_K\neq\emptyset.
\end{equation}

Let $\widetilde{x}\in \overline{B_1}$
such that
\[u(\widetilde{x})+\frac{K}{2}|\widetilde{x}|^2
=\underset{x\in\overline{B_1}}\min\left(u(x)+\frac{K}{2}|x|^2\right)=:m.\]
We have
$-\frac{K}{2}|x|^2+m
\overset{\widetilde{x}}\eqslantless u$ in $\overline{B_1}$.
In particular, we obtain
$m\leq u(0)$
and
$-\frac{K}{2}|\widetilde{x}|^2+m=u(\widetilde{x})$.
Subtracting one from the other, we conclude that
\[\frac{K}{2}|\widetilde{x}|^2
\leq u(0)-u(\widetilde{x})
\leq\underset{B_1}{\osc}~u\leq2\norm{u}_{L^{\infty}(B_1)}\leq1/8,\]
which implies $|\widetilde{x}|\leq1/2$.
Thus $\widetilde{x}\in B_1\cap T^-_K$.

In view of \eref{eq.ne} and \lref{le.dl},
by applying \lref{le.cl} to $E:=T^-_K$ and $F:=T^-_{KM}$,
we obtain
\[\left|B_1\setminus T^-_{KM}\right|
\leq(1-\frac{\mu}{5^n})\left|B_1\setminus T^-_K\right|
=:\theta\left|B_1\setminus T^-_K\right|.\]
This completes the proof of \lref{le.ed}.
\end{proof}

\begin{corollary}\label{co.ed}
Under the assumptions of \lref{le.ed}, we have
\[\left|B_1\setminus T^-_{M^k}\right|
\leq |B_1|\theta^k,~\forall k\geq1.\]
\end{corollary}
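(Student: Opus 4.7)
The plan is to prove the corollary by a direct induction on $k$, using \lref{le.ed} as a black box at each step. The key observation is that \lref{le.ed} is already stated in a scale-invariant form in the parameter $K$: for any $K\geq 1$ one has the one-step contraction $|B_1\setminus T^-_{KM}|\leq\theta|B_1\setminus T^-_K|$, with $M>1$ and $\theta\in(0,1)$ depending only on $n,\lambda,\Lambda$. Since $M>1$, every iterate $M^j$ with $j\geq 0$ is an admissible choice of $K$, which is exactly what enables the geometric iteration.

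For the base case $k=1$, I would apply \lref{le.ed} with $K=1$ and use the trivial estimate $|B_1\setminus T^-_1|\leq|B_1|$ to obtain
\[\left|B_1\setminus T^-_M\right|\leq\theta\left|B_1\setminus T^-_1\right|\leq\theta|B_1|.\]
For the inductive step, assuming $|B_1\setminus T^-_{M^k}|\leq|B_1|\theta^k$, I would apply \lref{le.ed} with $K=M^k\geq 1$ to get
\[\left|B_1\setminus T^-_{M^{k+1}}\right|\leq\theta\left|B_1\setminus T^-_{M^k}\right|\leq|B_1|\theta^{k+1},\]
which closes the induction.

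There is no real obstacle here: all the analytic work (the density lemma, the covering lemma, the regularization argument, and the nonemptiness of $B_1\cap T^-_K$ which uses the normalization $\|u\|_{L^\infty}\leq 1/16$) has already been absorbed into the statement of \lref{le.ed}. The only thing to verify is that the hypothesis $K\geq 1$ of \lref{le.ed} is preserved along the iteration, which is immediate from $M>1$. Thus the corollary is essentially a formal consequence and the write-up should be just a few lines.
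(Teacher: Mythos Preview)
Your proposal is correct and is essentially identical to the paper's own proof: the paper simply chains the inequality from \lref{le.ed} with $K=M^{k-1},M^{k-2},\ldots,1$ to get $|B_1\setminus T^-_{M^k}|\leq\theta^k|B_1\setminus T^-_1|\leq|B_1|\theta^k$, which is your induction written as a single telescoping line.
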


\begin{proof}
Applying \lref{le.ed} to $K=M^{k-1},M^{k-2},...,M$ and $1$, we deduce that
\[\left|B_1\setminus T^-_{M^k}\right|
\leq\theta\left|B_1\setminus T^-_{M^{k-1}}\right|
\leq...\leq\theta^k\left|B_1\setminus T^-_1\right|
\leq|B_1|\theta^k.\]
\end{proof}

\lref{le.w2dc} now follows easily from \textsl{Corollary \ref{co.ed}}.
\\

\noindent\emph{Proof of \lref{le.w2dc}}.
(1) By definition,
$T^+_\kappa(u,\overline{B_1})=T^-_\kappa(-u,\overline{B_1})$.
Applying \textsl{Corollary \ref{co.ed}} to $-u$, we have
$\left|B_1\setminus T^+_{M^k}\right|\leq |B_1|\theta^k$, $\forall k\geq1$.
Therefore
\begin{eqnarray*}
\left|B_1\setminus T_{M^k}\right|
&=&\left|B_1\setminus \left(T^-_{M^k}\cap T^+_{M^k}\right)\right|\\
&\leq&\left|B_1\setminus T^-_{M^k}\right|
+\left|B_1\setminus T^+_{M^k}\right|\\
&\leq& 2|B_1|\theta^k,~\forall k\geq1.
\end{eqnarray*}
It follows that
\begin{equation}\label{eq.tt}
\left|B_1\setminus T_{t}\right|\leq Ct^{-\sigma},~\forall t>0,
\end{equation}
where $\sigma:=-\log_M\theta$ and $C:=2|B_1|/\theta$.

(2) To get $\norm{u}_{W^{2,\delta}(B_1)}\leq C$,
by the interpolation theorem (see \textsl{Theorem 7.28} of \cite{GT}),
we only need to show that
$\norm{D^2u}_{L^\delta(B_1)}\leq C$.
By \eref{eq.tt}, this can be obtained directly from \lref{le.lp}
(cf. \cite{CC}, pp. 6-7, 62-63, 66-67).

For heuristic purposes, we here additionally give a simple proof
for the special case that $u\in C^2(B_1)$.
Since
\[B_1\cap T_t\subset\{x\in B_1:-tI\leq D^2u(x)\leq tI\}
\subset\{x\in B_1:|D^2u(x)|\leq \sqrt{n}t\},\]
we see that
\[\{x\in B_1:|D^2u(x)|>\sqrt{n}t\}\subset B_1\setminus T_t.\]
Hence
\[|\{x\in B_1:|D^2u(x)|>\sqrt{n}t\}|
\leq|B_1\setminus T_t|\leq Ct^{-\sigma}.\]
Applying \lref{le.lp}, we obtain
\begin{eqnarray*}
\norm{D^2u}^\delta_{L^\delta(B_1)}
&\leq& C(n,M,\delta)\left(|B_1|
+\underset{k}\sum M^{\delta k}|\{x\in B_1:|D^2u(x)|>\sqrt{n}M^k\}|\right)\\
&\leq& C(n,M,\delta)\left(|B_1|+C\underset{k}\sum M^{(\delta-\sigma)k}\right)
\leq C(n,\lambda,\Lambda,\delta)
<\infty.
\end{eqnarray*}
\qed

\end{document}